\newtheorem{theorem}{Theorem}[section]
\newtheorem{proposition}[theorem]{Proposition}
\newtheorem{lemma}[theorem]{Lemma}
\newtheorem{corollary}[theorem]{Corollary}
\theoremstyle{definition}
\newtheorem{example}[theorem]{Example}
\newtheorem{definition}[theorem]{Definition}
\theoremstyle{remark}
\theoremstyle{remark}
\newtheorem{remark}[theorem]{Remark}
\def\({{\rm (}}
\def\){{\rm )}}
\let\Mathrm\operator@font
\newcommand{\bn}{\ensuremath{\mathbb N}}
\def\standop#1{\mathop{\Mathrm #1}\nolimits}
\def\difstop#1#2{\expandafter\def\csname #1\endcsname{\standop{#2}}}
\def\defstop#1{\difstop{#1}{#1}}
\def\id{\mathord{\Mathrm{id}}}
\def\NN{\mathbb{N}}
\def\ZZ{\mathbb{Z}}
\def\sdarrow#1{\downarrow\hbox to 0pt{\scriptsize$#1$\hss}}
\def\suarrow#1{\uparrow\hbox to 0pt{\scriptsize$#1$\hss}}
\def\ssearrow#1{\searrow\hbox to 0pt{\scriptsize$#1$\hss}}
\def \dim{\operatorname{dim}}
\def \rank{\operatorname{rank}}
\def \coloneq{\mathrel{\mathop:}=}
\def \id{\operatorname{Id}}
\def \set{\operatorname{Set}}
\def \emdim{\text{em.dim}}
\def\section{\@startsection{section}{1}{\z@ }%
  {-3.5ex plus -1ex minus -.2ex}{2.3ex plus .2ex}{\bf }}
\long\def\refname{\par\kern -3ex
  \begin{center}\rm R\sc{eferences}\end{center}\par\kern 
  -2ex}
\def\@seccntformat#1{\csname the#1\endcsname.\quad}
\def\@@@sect#1#2#3#4#5#6[#7]#8{%
  \ifnum #2>\c@secnumdepth 
  \def \@svsec {}\else \refstepcounter {#1}%
  \def\@svsec{}
  \fi 
  \@tempskipa #5\relax 
  \ifdim \@tempskipa >\z@ 
  \begingroup #6\relax \@hangfrom {\hskip #3\relax 
    \@svsec}{\interlinepenalty \@M #8\par }\endgroup 
  \csname #1mark\endcsname {#7}
  \else 
  \def \@svsechd {#6\hskip #3\@svsec #8\csname #1mark\endcsname {#7}}
  \fi \@xsect {#5}}
\def\@@@startsection#1#2#3#4#5#6{%
  \if@noskipsec \leavevmode \fi \par \@tempskipa #4\relax \@afterindenttrue 
  \ifdim \@tempskipa <\z@ \@tempskipa -\@tempskipa \@afterindentfalse 
  \fi \if@nobreak \everypar {}\else \addpenalty {\@secpenalty }\addvspace 
  {\@tempskipa }\fi \@ifstar {\@ssect {#3}{#4}{#5}{#6}}{\@dblarg 
    {\@@@sect {#1}{#2}{#3}{#4}{#5}{#6}}}}
\def\theparagraph{\thesection.\arabic{paragraph}}
\def\aparagraph{\@@@startsection{paragraph}{2}{\z@ }%
  {1.75ex plus .2ex minus .15ex}{-1em}{\bf(\theparagraph) } }
\def\paragraph{\@@@startsection{paragraph}{2}{\z@ }%
  {1.75ex plus .2ex minus .15ex}{-1em}{}{\bf(\theparagraph)} }
\let\c@theorem\c@paragraph
\begin{document}

\begin{center}  
{\LARGE A new proof of non-Cohen-Macaulayness of Bertin's example} \\
~\\
{Takuma Seno} 
\end{center}

\begin{abstract}
Bertin's example is famous as the first known Noetherian UFD that is not Cohen-Macaulay.
In the example,
she employed a ring of invariants and proved that the ring is not Cohen-Macaulay by calculating a homogeneous system of parameter and generators of it.
In this paper,
we give a new proof by arguments on ring theoretic properties.

\end{abstract}

\section{Introduction}

Pierre Samuel asked his student,
Marie-Jos\'e Bertin,
if every UFD is Cohen-Macaulay or not.
She answered this question negatively by studying an example in which a cyclic group of order
$4$
acting on
$K[x_1 , x_2 , x_3 , x_4]$
by permutating the variables.
Larry Smith proved that a ring of invariants of
$3$-dimensional representation of a finite group is always Cohen-Macaulay.
So the representation in Bertin's example has the minimum dimension that the ring of invariants is not Cohen-Macaulay.
Her proof of non-Cohen-Macaulayness depends on calculations of a homogeneous system of parameter (h.s.o.p.\ for short) and generators of it.
The main subject of this paper is giving a new proof of non-Cohen-Macaulayness.
We also introduce a theorem which is a generalization  of the proof.

Throughout this paper,
let
$G$
be a finite group,
$K$
be a field,
$V$
be a finite dimensional representation of
$G$,
and
$\NN$
be the set of nonnegative integer.
(That is,
$0 \in \NN$.)

In Section
$2$,
we introduce the definitions of the Hilbert series of an
$\NN$-graded ring,
and some properties of it.
We also introduce Stanley's result.
He proved that the Gorensteinness of some kind of an
$\NN$-graded ring depends only on its Hilbert series.
In Section 3,
we give a description of invariant theory of a finite group.
Invariant theory of a finite group is classified into two cases.
One is called the modular case and the other is called the nonmodular case.
In the nonmodular case,
a ring of invariants is always Cohen-Macaulay and
there is a well-known characterizations of Gorensteinness,
which is called Watanabe's theorem.
Furthermore,
we can calculate the Hilbert series of a ring of invariants by Molien's theorem.
However,
in the modular case,
the situation is complicated.
A ring of invariants is not always Cohen-Macaulay,
and the above theorems doesn't hold in general.
Amiram Braun proved that if a ring of invariants is Cohen-Macaulay,
Watanabe'theorem is true in the modular case.
We introduce the definition of a permutation representation and related theorems.
When
$V$
is a permutation representation,
every homogeneous part of the ring of invariants is generated by the orbit sums of all monomials.
M.\ G\"{o}bel obtained a good result about generators as
$K$-algebra.
By using G\"{o}bel's theorem,
we can prove that the ring of invariants of the
$n$-dimensional alternating group
$A_n$
is a hypersurface.
Section 4 is the main part of this paper.
In this section,
we briefly review Bertin's original proof of non-Cohen-Macaulayness of the ring of invariants and then give a new proof of it.
We refer the reader to Ellingsrud and Skjelbred \cite{Elli Skje} and Kemper \cite{Kemp}
for results on depths and (non-)Cohen-Macaulayness of the ring of invariants under the action of finite group,
including different proofs of Bertin's example treated in this paper.
We also describe a generalization of our new method of proof.

Acknowledgment.
The author is grateful to Professor Gregor Kemper for valuable advice.

\section{From commutative algebra}

\begin {definition} \label{def Hilb seri}
Let
$R$
be a positively graded finitely generated
$K$-algebra with
$R_0 = K$.
\[
H ( R , \lambda ) \coloneq \sum_{n \in \bn} \dim_K R_n \lambda^n
\]
We call the power series the Hilbert series of $R$.
Let $x_1 , \dots , x_d$ be an h.s.o.p.\ of
$R$.
Then,
the Hilbert series of $R$ is represented as follows. 
\[ H ( R , \lambda ) = \frac{h(\lambda)}{ (1-\lambda^{a_1}) (1-\lambda^{a_2}) \cdots (1-\lambda^{a_d})} \ \ \ \ \ h(\lambda) \in \ZZ[\lambda] \ , 
\]
where $d=\dim R , \ a_i = \deg x_i$.
\end{definition}

If
$R$
is Cohen-Macaulay,
the following holds.

\begin{proposition}\label{formula Hilb diveded}
Let
$R$ be an \bn-graded Cohen-Macaulay ring with $R_0 = K$,
and
$x_1 , \dots x_d$
be an h.s.o.p.\ of
$R$.
Then,
\[
H(R/(x_1 , \dots x_i) , \lambda) =H(R , \lambda) \prod_{k=1}^i(1-\lambda^{a_k})
\]
for
$i=1 , \dots d$,
where
$a_i = \deg x_i$.
\end{proposition}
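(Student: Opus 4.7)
The plan is to proceed by induction on $i$, using the fact that in a graded Cohen-Macaulay ring every homogeneous system of parameters is a regular sequence. Since $R$ is $\mathbb{N}$-graded with $R_0=K$ and Cohen-Macaulay, the h.s.o.p. $x_1,\dots,x_d$ may be assumed homogeneous, and the usual correspondence between h.s.o.p. and regular sequence (proved via the characterization of depth through Ext, or by localizing at the irrelevant ideal) gives that $x_1,\dots,x_d$ is a regular sequence on $R$, and more generally that $x_i$ is a nonzerodivisor on $R/(x_1,\dots,x_{i-1})$ for every $i$.

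For the base case $i=1$, the multiplication map $x_1\colon R\to R$ is injective and raises degree by $a_1$. Writing $R(-a_1)$ for the graded module with the grading shifted so that $R(-a_1)_n = R_{n-a_1}$, I get the short exact sequence of graded $R$-modules
\[
0 \longrightarrow R(-a_1) \xrightarrow{\,\cdot x_1\,} R \longrightarrow R/(x_1) \longrightarrow 0.
\]
Hilbert series are additive on short exact sequences of finitely generated graded modules, and $H(R(-a_1),\lambda) = \lambda^{a_1} H(R,\lambda)$, so
\[
H(R/(x_1),\lambda) = H(R,\lambda) - \lambda^{a_1} H(R,\lambda) = H(R,\lambda)(1-\lambda^{a_1}).
\]

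For the inductive step, assume the formula holds up through $i-1$. Set $\overline{R} \coloneq R/(x_1,\dots,x_{i-1})$. Since $x_i$ is a nonzerodivisor on $\overline{R}$, the same short exact sequence argument applied to $\overline{R}$ and $x_i$ yields
\[
H(R/(x_1,\dots,x_i),\lambda) = H(\overline{R},\lambda)(1-\lambda^{a_i}),
\]
and substituting the inductive hypothesis finishes the step.

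The only nontrivial point is the assertion that an h.s.o.p. in a graded Cohen-Macaulay ring is a regular sequence; I would either quote this from a standard commutative algebra reference or briefly recall it via the equality $\operatorname{depth} R = \dim R$ combined with the graded Nakayama lemma applied at the irrelevant maximal ideal. Everything else is formal manipulation of Hilbert series via additivity on short exact sequences, which I expect to be routine.
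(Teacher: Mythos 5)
Your proof is correct. The paper itself states this proposition without proof, treating it as a standard fact from the theory of graded Cohen--Macaulay rings (it is essentially the argument in Bruns--Herzog), so there is no in-paper argument to compare against. Your route --- reduce to the fact that an h.s.o.p.\ in a graded Cohen--Macaulay ring is a regular sequence, then apply additivity of Hilbert series to the short exact sequence $0 \to R(-a_i) \to R/(x_1,\dots,x_{i-1}) \to R/(x_1,\dots,x_i) \to 0$ and induct --- is exactly the standard proof, and every step (the degree shift $H(R(-a),\lambda)=\lambda^a H(R,\lambda)$, the nonzerodivisor property of $x_i$ modulo the earlier elements, the passage from the local to the graded setting via the irrelevant maximal ideal) is handled correctly.
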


This proposition says that if
$R$
is Cohen-Macaulay,
$h(\lambda)$,
the numerator of the Hilbert series,
corresponds to the number and the degrees of the free generators of
$R$
as a
$K[x_1 , \dots , x_d]$
module.
That is,
let
$y_1 , \dots , y_r$
be the free generators and
$c_i = \deg y_i$
then,
\[
h(\lambda) = \sum^r_{i=1} \lambda^{c_i}
\]

\begin{definition}
Let
$R$
be a Noetherian commutative ring graded by
$\NN$.
We say
$R$
is a G-algebra if
$R_0 = K$
is satisfied.

\end{definition}

\begin{theorem}\label{Stan}
\({\bf Stanley}\)
(\cite{Stan},Theorem 4.4)
Let
$R$
be a G-algebra.
Suppose that
$R$
is a Cohen-Macaulay integral domain of Krull dimension d.
Then
$R$
is Gorenstein if and only if for some
$\rho \in \ZZ$,
\[
H \left(R,\frac{1}{\lambda} \right)=(-1)^d\lambda^\rho H(R,\lambda) .
\]
\end{theorem}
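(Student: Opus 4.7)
The plan is to reduce to an Artinian quotient and then to invoke the graded canonical module of $R$, where the integral-domain hypothesis becomes essential. Fix an h.s.o.p.\ $x_1,\dots,x_d$ of degrees $a_1,\dots,a_d$; because $R$ is Cohen-Macaulay, Proposition \ref{formula Hilb diveded} writes
\[
H(R,\lambda)=\frac{h(\lambda)}{\prod_{k=1}^{d}(1-\lambda^{a_k})}
\]
with $h(\lambda)\in\ZZ[\lambda]$ and $h(0)=1$. Using $1-\lambda^{-a_k}=-\lambda^{-a_k}(1-\lambda^{a_k})$, a direct substitution shows that the symmetry in the statement is equivalent to the palindrome identity $\lambda^{\sum_k a_k-\rho}h(1/\lambda)=h(\lambda)$, which since $h(0)=1$ forces $\deg h=\sum_k a_k-\rho$ and means precisely that $h$ is palindromic.

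For the ``only if'' direction, Gorensteinness is preserved and reflected modulo a regular sequence, so $R$ is Gorenstein if and only if the Artinian quotient $A=R/(x_1,\dots,x_d)$ is. By Proposition \ref{formula Hilb diveded} the Hilbert polynomial of $A$ is exactly $h(\lambda)$. A graded Artinian local Gorenstein $K$-algebra has one-dimensional socle concentrated in its top degree and hence satisfies Poincar\'e duality, so its Hilbert polynomial is automatically palindromic; combining this with the translation above yields the desired symmetry with $\rho=\sum_k a_k-\deg h$.

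The converse is the more delicate direction, and the main tool is the graded canonical module $\omega_R$. Graded local duality provides the identity $H(\omega_R,\lambda)=(-1)^d H(R,1/\lambda)$, so the assumed symmetry yields $H(\omega_R,\lambda)=\lambda^{\rho}H(R,\lambda)$; that is, $\omega_R$ and the free twisted module $R(-\rho)$ share the same graded Hilbert function. The main obstacle is to upgrade this numerical coincidence to an actual isomorphism of graded modules, and this is where the assumption that $R$ is a domain is unavoidable. Because $R$ is a Cohen-Macaulay graded domain, $\omega_R$ is a rank-one torsion-free graded $R$-module, so any nonzero homogeneous element $w\in\omega_R$ of minimal degree produces an injection $R(-\rho)\hookrightarrow\omega_R$ sending $1$ to $w$; its cokernel is a graded module with identically vanishing Hilbert series, hence is zero. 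Therefore $\omega_R\cong R(-\rho)$, which is the graded-module definition of $R$ being Gorenstein.
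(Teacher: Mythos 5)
Your proof is correct and follows essentially the same route as Stanley's original argument, which the paper merely cites and sketches in the Remark following the theorem: reduction to the Artinian quotient (socle/Poincar\'e duality) for the forward direction, and the canonical module $K_R=\Ext^{s-d}_A(R,A)$ together with the Hilbert-series duality $H(K_R,\lambda)=(-1)^dH(R,1/\lambda)$ for the converse. Your treatment of the converse is in fact more complete than the paper's sketch, since you make explicit the one place the domain hypothesis is genuinely needed: $K_R$ is rank-one torsion-free, so the numerically forced degree-zero map $R(-\rho)\to K_R$ is injective and its cokernel, having zero Hilbert series, vanishes.
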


The condition in Stanley's theorem can be rephrased  that the numerator of the Hilbert series is \lq \lq palindromic.\rq \rq

\begin{definition}
We say that a polynomial $f(x)$ is palindromic if there exists an integer $n$ such that $x^nf(\frac{1}{x}) = f(x)$.
\end{definition}

This definition is equivalent to say that $a_k = a_{k+r} , a_{k+1} = a_{k+r-1} , \dots $
for
$f(x) = a_{k}x^k + \cdots + a_{k+r}x^{k+r} \ (a_k , a_{k+r} \neq 0)$.

\begin{proposition}
Let $R$ be a Cohen-Macaulay ring .
Then,
$h(\lambda)$
is a palindromic polynomial if and only if the Hilbert series of
$R$
satisfies the conclusion of Stanley's theorem.
\($h(\lambda)$ is defined in Definition \ref{def Hilb seri}.\)
\end{proposition}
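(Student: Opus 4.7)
The plan is a direct manipulation showing that the Stanley identity and the palindromic condition for $h(\lambda)$ are two ways of writing the same equation, once we understand how the denominator $\prod_i(1-\lambda^{a_i})$ transforms under $\lambda \mapsto 1/\lambda$.

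First, I would record the elementary identity $1 - \lambda^{-a} = -\lambda^{-a}(1-\lambda^{a})$; taking the product over $i=1,\dots,d$ and setting $A = \sum_{i=1}^{d} a_i$ yields
\[
\prod_{i=1}^{d} (1-\lambda^{-a_i}) = (-1)^{d}\,\lambda^{-A}\,\prod_{i=1}^{d} (1-\lambda^{a_i}).
\]
Substituting into the Hilbert series (which, since $R$ is Cohen--Macaulay, admits the standard form of Definition \ref{def Hilb seri}) gives
\[
H(R, 1/\lambda) \;=\; \frac{h(1/\lambda)}{\prod_i (1-\lambda^{-a_i})} \;=\; (-1)^{d}\,\lambda^{A}\,\frac{h(1/\lambda)}{\prod_i (1-\lambda^{a_i})}.
\]
Therefore Stanley's identity $H(R, 1/\lambda) = (-1)^{d} \lambda^{\rho} H(R, \lambda)$ is equivalent, after canceling the common factor $(-1)^{d}/\prod_i(1-\lambda^{a_i})$, to
\[
\lambda^{A-\rho}\,h(1/\lambda) \;=\; h(\lambda).
\]

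This last identity is precisely the palindromic condition for $h$ with parameter $n = A-\rho$. Hence, given palindromic $h$ with parameter $n$, I would set $\rho = A - n \in \ZZ$; conversely, given Stanley's identity with some $\rho \in \ZZ$, the integer $n = A - \rho$ witnesses palindromicity. No serious obstacle hides here: the argument is essentially bookkeeping of the sign $(-1)^d$ and the shift $A = \sum a_i$, and since $A, \rho \in \ZZ$ the correspondence $n \leftrightarrow \rho$ automatically stays inside the integers, so the two conditions match exactly.
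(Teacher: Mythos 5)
Your proposal is correct and follows essentially the same route as the paper: substitute $\lambda\mapsto 1/\lambda$ into the closed form $h(\lambda)/\prod_i(1-\lambda^{a_i})$ and track the sign $(-1)^d$ and the shift $\sum_i a_i$. Your version is if anything a little cleaner, since you isolate the identity $\prod_i(1-\lambda^{-a_i})=(-1)^d\lambda^{-A}\prod_i(1-\lambda^{a_i})$ once and read off the equivalence $\lambda^{A-\rho}h(1/\lambda)=h(\lambda)$ in a single step, where the paper runs the two implications separately.
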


\begin{proof}
If
$h(\lambda)$
is a palindromic polynomial,
\[
H\left( R , \frac{1}{\lambda} \right) = \frac{h\left( \frac{1}{\lambda} \right)}{\left( 1- \left( \frac{1}{\lambda} \right)^{a_1} \right) \cdots \left( 1-  \left( \frac{1}{\lambda} \right)^{a_d} \right)} \ \ \ \ \ \ \ \ \ \ \ \ \ \ \ \ \ \ \ 
\]
\[
\ \ \ \ \ \ \ \ \ \ \ \ \ \ \ \ \ 
= \frac{\lambda^l \lambda^m h(\lambda)}{(-1)^d (1-\lambda^{a_1}) \cdots (1-\lambda^{a_d})} \ \ \ \left(l:=\sum^d_{i=1}a_i\right)
\]
\[ \ \ \ \ \ \ \ \ \ \ \ \ \ \ 
= \frac{\lambda^\rho h(\lambda)}{(-1)^d (1-\lambda^{a_1}) \cdots (1-\lambda^{a_d})}  \ \ \ \ (\rho = l+m)
\]
\[
= (-1)^d \lambda^\rho H(R , \lambda) . \ \ \ \ \ \ \ \ \ \ \ \ \ \ \ \ \ \ \ \ 
\]
For the converse,
\[
h\left(\frac{1}{\lambda} \right) = H\left(R , \frac{1}{\lambda}\right) \left( 1- \left( \frac{1}{\lambda} \right)^{a_1} \right) \cdots \left( 1-  \left( \frac{1}{\lambda} \right)^{a_d}  \right) 
\]
\[
\ \ \ \ \ \ \ \ \ 
=(-1)^d\lambda^\rho H(R,\lambda) (-1)^d (1-\lambda^{a_1}) \cdots (1-\lambda^{a_d})
\]
\smallskip
\[
= \lambda^\rho h(\lambda). \ \ \ \ \ \ \ \ \ \ \ \ \ \ \ \ \ \ \ \ \ \ \ \ \ \ \ \ \ \ \ \ \ \ \ \ \ \ 
\]
\end{proof}

\begin{remark}
It is not so difficult to see that
$h \( \lambda \)$
is palindromic if
$R$
is a Gorenstein ring.
Let
$x_1 , \dots x_d$
be an h.s.o.p.\ of
$R$.
By considering the quotient ring
$R/(x_1 , \dots , x_d)$,
it comes down to the case that
$R$
is Artinian.

In this statement,
we don't need the condition that
$R$
is \lq \lq domain.\rq \rq
What is great in Stanley's result is to have found out a sufficient condition for the converse.
We introduce the outline of his result as follows.
Let
$A := k[Y_1, \dots , Y_s]$
be a polynomial ring and
$y_1 , \dots , y_s$
be a homogeneous generators of
$R$.
That is,
$R = k[y_1 , \dots , y_s]$.
$R$
is Cohen-Macaulay so we can take a finite free resolution of
$R$
as an
$A$-module.
\[ 0 \rightarrow M_h \rightarrow \cdots \rightarrow M_0 \rightarrow R \rightarrow 0 : exact \]
Set
$\( - \)^* := \Hom_A \( - , A \) ,\ K_R :=\Ext^{s-d}_A \( R,A\) $.
Then,
$M_i \simeq M^*_i$.
With some degree shift,
$K_R$
coincides the canonical module of
$R$.
From above exact sequence,
we obtain
\[
 0 \rightarrow M^*_0 \rightarrow \dots \rightarrow M^*_h \rightarrow K_R \rightarrow 0 :exact 
.\]
Hilbert series of
$R$
(similarly of
$K_R$
)
is calculated as the alternating sum of Hilbert series of
$M_i$.
\[
 H \( M_i , \lambda \) = \frac{\sum^{\beta \( i \) }_{j=1}\lambda^{g_{ij}}}{\Pi^s_{t=1} \( 1- \lambda^{e^t} \) } 
\]
where 
$X_{i1} , \dots , X_{i \beta\( i \)}$
is a basis of
$M_i$,
$g_{ij} := \deg X_{ij} $
The conclusion follows from palindromicness of a numerator of Hilbert series of
$R$.
\end{remark}

\section{From invariant theory}

\begin{theorem}
Let
$S_n$
be the symmetric group of degree
$n$.
Then,
\[
K[V]^{S_n} = K[s_1 , s_2 , \dots , s_n], \ 
where \ \ 
s_i = \sum_{1 \le k_1 < k_2 < \dots < k_i \le n} x_{k_1} \cdots x_{k_i}
\]
Each
$s_i$
is algebraic independent so
$K[s_1 , s_2 , \dots , s_n]$
is polynomial ring.
\end{theorem}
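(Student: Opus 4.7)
The plan is to prove the two assertions separately: first, that every symmetric polynomial can be written as a polynomial in the $s_i$; second, that the $s_i$ are algebraically independent over $K$.

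For the first assertion, the inclusion $K[s_1,\ldots,s_n]\subseteq K[V]^{S_n}$ is immediate since each $s_i$ is fixed by $S_n$. For the reverse inclusion, I would argue by induction on the lexicographic leading monomial. Fix the lex order with $x_1>x_2>\cdots>x_n$, and for a nonzero symmetric polynomial $f$ let $c\,x_1^{a_1}\cdots x_n^{a_n}$ be its leading term. Symmetry forces $a_1\ge a_2\ge\cdots\ge a_n$: otherwise a suitable transposition in $S_n$ would carry this monomial to a lex-larger one also appearing in $f$. Since the lex-leading monomial of $s_i$ is $x_1x_2\cdots x_i$, a direct computation shows that
\[
g \;\coloneq\; s_1^{a_1-a_2}\,s_2^{a_2-a_3}\cdots s_{n-1}^{a_{n-1}-a_n}\,s_n^{a_n}
\]
has leading monomial $x_1^{a_1}x_2^{a_2}\cdots x_n^{a_n}$. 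Then $f-cg$ is symmetric with strictly smaller leading monomial. Since the set of monomials of degree at most $\deg f$ is finite, iterating terminates and expresses $f$ as a polynomial in $s_1,\ldots,s_n$.

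For the second assertion, suppose for contradiction that $P(y_1,\ldots,y_n)\in K[y_1,\ldots,y_n]$ is a nonzero polynomial with $P(s_1,\ldots,s_n)=0$ in $K[x_1,\ldots,x_n]$. For each monomial $y_1^{b_1}\cdots y_n^{b_n}$ appearing in $P$, the lex-leading monomial of $s_1^{b_1}\cdots s_n^{b_n}$ in the $x$-variables is
\[
x_1^{b_1+b_2+\cdots+b_n}\,x_2^{b_2+\cdots+b_n}\cdots x_n^{b_n}.
\]
The assignment $(b_1,\ldots,b_n)\mapsto(b_1+\cdots+b_n,\,b_2+\cdots+b_n,\,\ldots,\,b_n)$ is injective, so distinct monomials of $P$ produce distinct leading $x$-monomials. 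Hence the lex-leading term of $P(s_1,\ldots,s_n)$ cannot cancel, contradicting $P(s_1,\ldots,s_n)=0$.

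The only real obstacle is the bookkeeping in verifying that the leading monomial of the product $g$ is exactly $x_1^{a_1}\cdots x_n^{a_n}$ and that the subtraction $f-cg$ indeed strictly decreases the leading monomial in lex order; but once the multiplicativity of the leading term with respect to lex is invoked, both points are routine. Algebraic independence then follows from the same leading-term technique applied in the opposite direction.
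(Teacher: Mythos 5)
Your proof is correct: it is the classical lexicographic-leading-term argument for the fundamental theorem of symmetric polynomials, and both halves (the descent $f\mapsto f-cg$ terminating because lex order on monomials of bounded degree is a finite well-ordered set, and the injectivity of $(b_1,\dots,b_n)\mapsto(b_1+\cdots+b_n,\dots,b_n)$ for algebraic independence) are sound over any field. The paper states this theorem without proof, treating it as a known classical fact, so there is no argument in the paper to compare against; your write-up supplies exactly the standard proof one would cite.
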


If
$G \subset S_n$,
the following corollary is immediate
(we should pay attention to that
$\dim K[V] = \dim K[V]^G$).
\begin{corollary}\label{elem symm sop}
Let
$V$
be a
$n$-dimensional permutation representation of
$G$.
Then,
$s_1 , \dots , s_n$
is an h.s.o.p.\ of
$K[V]^G$.

\end{corollary}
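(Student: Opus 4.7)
The plan is to show that $s_1,\ldots,s_n$ generate a polynomial subring of $K[V]^G$ over which $K[V]^G$ is a finite module, and then to combine this with the dimension equality $\dim K[V]^G = n$ (the parenthetical remark) to conclude that they form an h.s.o.p.

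First I would observe that $s_1,\ldots,s_n$ are $S_n$-invariant by the preceding theorem, hence $G$-invariant since $G\subseteq S_n$, so they lie in $K[V]^G$; moreover they are homogeneous with $\deg s_i = i$ and algebraically independent. Next, because $G$ is finite, every element $f\in K[V]$ satisfies the monic polynomial $\prod_{g\in G}(T - g\cdot f) \in K[V]^G[T]$, so the extension $K[V]^G \hookrightarrow K[V]$ is integral; in particular $\dim K[V]^G = \dim K[V] = n$. Applying the same integrality argument to the full symmetric group, the extension $K[s_1,\ldots,s_n] = K[V]^{S_n} \hookrightarrow K[V]$ is integral, and since $K[V]$ is also finitely generated as an algebra over $K[s_1,\ldots,s_n]$ it is in fact a finite $K[s_1,\ldots,s_n]$-module. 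Because $K[s_1,\ldots,s_n]$ is Noetherian, the intermediate ring $K[V]^G$, viewed as a $K[s_1,\ldots,s_n]$-submodule of the finite module $K[V]$, is itself finitely generated as a $K[s_1,\ldots,s_n]$-module.

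To conclude, the finiteness of $K[V]^G$ over $K[s_1,\ldots,s_n]$ implies that the graded quotient $K[V]^G/(s_1,\ldots,s_n)K[V]^G$ is a finite-dimensional $K$-vector space, so the homogeneous ideal $(s_1,\ldots,s_n)$ is primary to the irrelevant maximal ideal of $K[V]^G$. Having exactly $n = \dim K[V]^G$ such homogeneous elements is precisely the defining property of a homogeneous system of parameters.

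The only substantive point, and the one flagged by the corollary's parenthetical hint, is verifying the dimension equality $\dim K[V]^G = n$; once that is in hand together with the module-finiteness of $K[V]$ over $K[V]^{S_n}$ coming from the preceding theorem, the result follows purely formally from the characterization of an h.s.o.p.\ via module-finiteness over the subalgebra generated by the parameters.
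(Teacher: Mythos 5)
Your proof is correct and is exactly the argument the paper intends: the paper states the corollary as ``immediate'' from $K[V]^{S_n}=K[s_1,\dots,s_n]$ together with the parenthetical hint $\dim K[V]=\dim K[V]^G$, and you have simply written out the standard details (integrality of $K[V]$ over the invariant subrings, module-finiteness of $K[V]^G$ over $K[s_1,\dots,s_n]$, and the characterization of an h.s.o.p.). No gaps.
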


Invariant theory of a finite group is classified into the modular case and the nonmodular case.
In the nonmodular case,
Hochster and Eagon proved that the ring of invariants is always a Cohen-Macaulay ring,
and K.Watanabe got a comprehensible characterization of Gorensteiness.

\begin{definition}\label{pseudo}
Let
$G$
be a finite group and
$V$
be a representation of
$G$.
We say that
$g \in G$
is a pseudoreflection
if
$\rank (\id - g ) =1$
satisfied.
\end{definition}

\begin{theorem}\label{wtnb}
\({\bf K.Watanabe}\)
Let
$\( | G | , p \) = 1$.
Then,
$K[V]^G$
is a Gorenstein ring if
$G \subset SL \( V \)$.
The converse holds if
$G$
contains no pseudoreflection.
\end{theorem}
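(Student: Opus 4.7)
My plan is to combine Stanley's criterion (Theorem \ref{Stan}) with Molien's formula
\[
H(K[V]^G,\lambda) = \frac{1}{|G|}\sum_{g \in G}\frac{1}{\det(1-\lambda g)} .
\]
In the nonmodular case $(|G|,p)=1$, the ring $K[V]^G$ is Cohen--Macaulay by Hochster--Eagon and is an integral domain of Krull dimension $d = \dim V =: n$, so Stanley's theorem applies.

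For the forward direction I would compute $H(K[V]^G,1/\lambda)$ from Molien's formula. Factoring $\lambda^{-1}$ out of each row gives
$\det(1-\lambda^{-1}g) = (-\lambda)^{-n}\det(g)\det(1-\lambda g^{-1})$,
and then reindexing $g \mapsto g^{-1}$ in the sum yields
\[
H(K[V]^G,1/\lambda) = \frac{(-\lambda)^n}{|G|}\sum_{g \in G}\frac{\det(g)}{\det(1-\lambda g)} .
\]
If $G \subset SL(V)$, every $\det(g)=1$, so the right-hand side equals $(-1)^n\lambda^n H(K[V]^G,\lambda)$; this is Stanley's criterion with $\rho = n$, so $K[V]^G$ is Gorenstein.

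For the converse, assume $K[V]^G$ is Gorenstein and $G$ has no pseudoreflection. Stanley's theorem combined with the computation above forces
\[
\sum_{g \in G}\frac{\det(g) - \lambda^{\rho-n}}{\det(1-\lambda g)} = 0
\]
for some $\rho \in \ZZ$. I would analyze this rational identity near $\lambda = 1$: the $g=e$ summand has a pole of order $n$, a pseudoreflection contributes a pole of order $n-1$, and every other non-identity $g$ contributes order $\dim V^g \le n-2$. Multiplying through by $(1-\lambda)^n$ and expanding about $\lambda = 1$, the coefficient of $(1-\lambda)$ is $\rho - n$ together with a contribution $-1$ from each pseudoreflection. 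Under our hypothesis this pins down $\rho = n$, and the identity reduces to $\sum_{g}(\det(g)-1)/\det(1-\lambda g) = 0$.

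The main obstacle is upgrading this last identity to $\det(g)=1$ for every $g$. The route I would take is to recognize $\frac{1}{|G|}\sum_g \det(g)/\det(1-\lambda g)$ as the Hilbert series of the relative-invariant module $R_{\det^{-1}} = \{f \in K[V] \mid g\cdot f = \det(g)^{-1} f\}$, which in the nonmodular case is, up to a degree shift, the canonical module of $K[V]^G$. Gorensteinness then says $R_{\det^{-1}}$ is free of rank one over $K[V]^G$, so it is generated by a single semi-invariant $J$. A Jacobian/class-group analysis identifies $J$ up to units with a product $\prod_H \ell_H^{a_H}$ over the hyperplane forms of pseudoreflections; in the absence of pseudoreflections the only possibility is that $J$ is a constant, which forces the character $\det^{-1}$ to be trivial on $G$, i.e.\ $G \subset SL(V)$. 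This reflection-theoretic step is where the real content of the theorem sits; the rest is Hilbert-series bookkeeping.
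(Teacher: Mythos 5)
The paper does not actually prove this theorem; it is quoted as a classical result (Watanabe, with Stanley's Hilbert-series reformulation), so there is no in-paper argument to compare yours against. Judged on its own terms, your forward direction is correct and is essentially Stanley's original derivation: the identity $\det(1-\lambda^{-1}g)=(-\lambda)^{-n}\det(g)\det(1-\lambda g^{-1})$ plus reindexing gives the functional equation with $\rho=n$ when $G\subset SL(V)$, and Stanley's criterion (applicable because $K[V]^G$ is a Cohen--Macaulay domain by Hochster--Eagon) finishes it. One caveat you should flag: the hypothesis is $(|G|,p)=1$, not $p=0$, and Molien's formula in positive nonmodular characteristic requires a Brauer lift of the eigenvalues; this is harmless here because a lifted $\det(g)$ equals $1$ exactly when $\det(g)=1$ in $K$ (the orders involved are prime to $p$), but it is not covered by the formula as stated in the paper.

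The converse is where the gap sits, and you have correctly located it but not closed it. Two issues. First, the Hilbert-series bookkeeping (the pole analysis at $\lambda=1$, pinning down $\rho=n$) is superfluous and cannot in any case yield $\det(g)=1$ termwise: the identity $\sum_g(\det(g)-1)/\det(1-\lambda g)=0$ only says that $\omega$ and $R$ have equal Hilbert series. You should instead pass directly from Gorenstein to $\omega_R\cong R$ (up to a degree shift), which already gives that the relative-invariant module $R_{\det^{-1}}$ is free of rank one — no Stanley needed for this direction. Second, the decisive step, ``free of rank one $\Rightarrow$ the character $\det^{-1}$ is trivial when $G$ has no pseudoreflection,'' is asserted via a Jacobian/product-of-hyperplane-forms picture that is really a theorem about reflection groups and does not directly apply to an arbitrary $G$. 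The correct general mechanism is Samuel's descent isomorphism $\Hom(G,K^{\times})\xrightarrow{\ \sim\ }\Cl(K[V]^G)$, $\chi\mapsto[R_{\chi}]$, valid precisely because the absence of pseudoreflections makes $K[V]$ unramified in codimension one over $K[V]^G$; under it $[\omega_R]$ corresponds to $\det^{\pm1}$, and freeness means triviality in $\Cl$, hence $\det\equiv 1$, i.e.\ $G\subset SL(V)$. Until that codimension-one ramification/class-group step is actually carried out, the converse remains an outline rather than a proof — as you yourself acknowledge, that step is where the entire content of the theorem lives.
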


\begin{theorem}\label{Moli}
\({\bf Molien}\)
Let
$K$
be a field of characteristic
$0$.
Then,
\[
H \left(R,\lambda \right) = \frac{1}{|G|} \sum_{\sigma \in G} \left( \frac{1}{\det ( \operatorname{Id} -  \lambda \sigma)} \right).
\]
\end{theorem}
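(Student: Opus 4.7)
The standard route is via the Reynolds operator together with a trace computation. Since $\operatorname{char}(K)=0$, the averaging map
\[
\rho \coloneq \frac{1}{|G|}\sum_{\sigma \in G} \sigma \colon K[V] \longrightarrow K[V]
\]
is a well-defined $K$-linear projection onto $K[V]^G$, and it preserves the grading. Restricting to the degree-$n$ piece, $\rho_n$ is a projection of the finite-dimensional space $K[V]_n$ onto $K[V]^G_n$, so its trace equals its rank, giving
\[
\dim_K K[V]^G_n \;=\; \operatorname{tr}(\rho_n) \;=\; \frac{1}{|G|}\sum_{\sigma \in G} \operatorname{tr}\bigl(\sigma \mid K[V]_n\bigr).
\]
Multiplying by $\lambda^n$, summing over $n$, and exchanging the finite outer sum with the power series reduces Molien's identity to the per-element identity
\[
\sum_{n \ge 0} \operatorname{tr}\bigl(\sigma \mid K[V]_n\bigr)\,\lambda^n \;=\; \frac{1}{\det\bigl(\operatorname{Id} - \lambda\sigma^{-1} \mid V\bigr)}
\]
for each fixed $\sigma \in G$. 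The substitution $\sigma \mapsto \sigma^{-1}$ is a bijection of $G$, so averaging the per-element identity recovers exactly the formula in the statement.

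\textbf{The per-element identity.} I would first extend scalars to an algebraic closure $\bar K$; neither side changes, since traces, determinants and graded dimensions are stable under base change. Over $\bar K$, each $\sigma$ has finite order and is therefore diagonalizable on $V$ with eigenvalues $\zeta_1,\dots,\zeta_d$ which are roots of unity. Taking a basis $x_1,\dots,x_d$ of $V^*$ dual to an eigenbasis of $V$, the induced action of $\sigma$ on $V^*$ (by inverse transpose) satisfies $\sigma \cdot x_i = \zeta_i^{-1} x_i$. Since $K[V]_n = \operatorname{Sym}^n V^*$ has the monomials $\{x^\alpha : |\alpha|=n\}$ as an eigenbasis with eigenvalues $\prod_i \zeta_i^{-\alpha_i}$,
\[
\operatorname{tr}\bigl(\sigma \mid K[V]_n\bigr) \;=\; \sum_{|\alpha|=n}\,\prod_{i=1}^d \zeta_i^{-\alpha_i}.
\]
Summing against $\lambda^n$ and factoring the resulting series into $d$ independent geometric series gives
\[
\sum_{n \ge 0}\operatorname{tr}\bigl(\sigma \mid K[V]_n\bigr)\lambda^n \;=\; \prod_{i=1}^d\frac{1}{1-\zeta_i^{-1}\lambda} \;=\; \frac{1}{\det(\operatorname{Id}-\lambda\sigma^{-1})},
\]
which is what was needed.

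\textbf{Main obstacle.} None of the steps is deep; the only real point demanding care is the bookkeeping between the $G$-action on $V$ and its induced action on $V^* \subset K[V]$, which is precisely where the inversion $\sigma \leftrightarrow \sigma^{-1}$ appears and is then absorbed by averaging over $G$. A secondary item is the passage to $\bar K$, which is standard but should be noted in order to justify the diagonalization of each $\sigma$.
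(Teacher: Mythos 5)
Your proof is correct and is the standard argument for Molien's theorem (Reynolds operator, trace of a projection equals its rank in characteristic $0$, per-element eigenvalue computation on $\operatorname{Sym}^n V^*$, and absorption of the inversion $\sigma \mapsto \sigma^{-1}$ by averaging over $G$). The paper states this theorem as a classical result and gives no proof, so there is nothing to compare against; your write-up, including the care taken with the contragredient action on $V^*$ and the passage to $\bar K$ to diagonalize each $\sigma$, would serve as a complete proof.
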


On the other hand,
in the modular case,
the situation is complicated.
We cannot say that the ring of invariants is Cohen-Macaulay in general.
Generalizations of Theorem \ref{wtnb} called Watanabe type theorem was actively studied.
Amiram Braun proved the generalization to the modular case. (Peter Fleischmann--Chris Woodcock also proved some result independently and almost simultaneously.)
We introduce Braun's result here.

\begin{definition}
Let
$g$
be a pseudoreflection.
We say
$g$
is a transvection if
$g$
is not diagonalizable.
If it is diagonalizable,
it is said to be a homology.

\end{definition}

\begin{remark}
In the nonmodular case,
every pseudoreflection is a homology.
In fact,
if
$g$
is a transvection,
its Jordan normal form is represented as follows.
\[
\left(
\begin{array}[]{cc|ccc} 
  1 & 1 & {} & {} & {} \\
  0 & 1 & {} & {} & {}  \\ \hline
  {} & {} & 1 & {} & {} \\
  {} & {} & {}& \ddots & {}\\ 
  {} & {} & {} & {} & 1 \\ 
\end{array} 
\right)
\]
This matrix has order
$p$.
If there exists any transvection in the nonmodular case,
it contradicts that
$(p , |G|) = 1$.
\end{remark}

\begin{theorem}\label{Bra1}
\({\bf Braun}\)
(\cite{Brau},Theorem B)
Let
$G \subset SL(V)$
be a finite group which contains no transvections.
Then,
the Cohen-Macaulay locus of
$S(V)^G$
coincides with its Gorenstein locus.
In particular,
if
$S(V)^G$
is Cohen-Macaulay then it is also Gorenstein.
\end{theorem}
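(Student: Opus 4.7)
The plan is to analyze the canonical module of $R := S(V)^G$ locally on the Cohen-Macaulay locus. Since Gorenstein implies Cohen-Macaulay, only the reverse inclusion needs proof; the \lq \lq in particular\rq \rq\ part then follows by localizing at the maximal graded ideal. Localizing at a prime $\fp$ where $R_\fp$ is Cohen-Macaulay, it suffices to show that $(\omega_R)_\fp$ is free of rank one over $R_\fp$, the standard criterion for Gorensteinness of a Cohen-Macaulay local ring. Note that $R$ is a normal Noetherian integral domain (a general fact for invariant rings of finite groups acting on polynomial rings), so $\omega_R$ is reflexive of rank one, corresponding to a class in the divisor class group $\Cl(R)$.

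Next I would identify $\omega_R$ using the finite integral extension $R \hookrightarrow S := S(V)$. Since $S$ is a polynomial ring with $\omega_S \cong S$ up to a degree shift, duality for the finite extension $R \subset S$ gives, up to shift,
\[
\omega_R \;\cong\; \uHom_R(S, R)^{G},
\]
where $G$ acts on the right-hand side with an additional twist by the determinant character coming from $\omega_S$. In the nonmodular setting this is the basis of the classical Watanabe argument. In the modular case the relative trace map $S \to R$ fails to be surjective, so the identification is subtler and the canonical class in $\Cl(R)$ acquires contributions indexed by the pseudoreflections of $G$.

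I would then apply Nakajima's description of $\Cl(S(V)^G)$ in terms of pseudoreflection characters: modulo classes attached to pseudoreflections, $\Cl(R)$ is a quotient of the character group of $G$, and the class of $\omega_R$ is represented by $\det_V$. The hypothesis $G \subset SL(V)$ makes $\det_V$ trivial, eliminating the global contribution. The hypothesis that $G$ contains no transvections implies, by the remark preceding the theorem, that every pseudoreflection is a homology, hence diagonalizable with its nontrivial eigenvalue a root of unity of order prime to $p$; such pseudoreflection contributions can be handled by the usual averaging argument just as in the nonmodular setting, and together with $G \subset SL(V)$ this forces all contributions to vanish.

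The main obstacle is controlling the modular situation: concretely, one must show that at a prime $\fp$ where $R_\fp$ is Cohen-Macaulay the class of $\omega_R$ becomes trivial locally. This is exactly where transvections would obstruct the argument, since they are pseudoreflections of $p$-power order and their associated characters cannot be extracted by trace maps that require dividing by group orders. Ruling out transvections is precisely the condition that allows a Watanabe-type computation to survive on the Cohen-Macaulay locus and to yield $(\omega_R)_\fp \cong R_\fp$, proving Gorensteinness there.
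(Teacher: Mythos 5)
This theorem is not proved in the paper at all: it is Braun's Theorem B, quoted from \cite{Bra} as an external black box, so there is no internal proof to compare your attempt against. Judged on its own, your proposal is a reasonable reconstruction of the \emph{strategy} used in the literature (reduce to the triviality of the class of $\omega_R$ in $\Cl(R)$, compute that class via the extension $R=S(V)^G\subset S(V)$ and the ramification along reflection hyperplanes, invoke Nakajima's description of $\Cl(R)$ inside $\Hom(G,K^\times)$), and the framing steps are sound: the Gorenstein locus is automatically contained in the Cohen--Macaulay locus, the graded \lq\lq in particular\rq\rq\ statement follows by localizing at the irrelevant ideal, and on the Cohen--Macaulay locus Gorensteinness is equivalent to $(\omega_R)_\fp\cong R_\fp$, with $\omega_R$ a rank-one reflexive module over the normal domain $R$.

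The genuine gap is that the entire content of Braun's theorem is concentrated in the two steps you assert rather than prove. First, the identification $\omega_R\cong\uHom_R(S,R)^G$ (up to shift and a determinant twist) is not correct as stated; the relative duality one actually has is $\omega_S\cong\Hom_R(S,\omega_R)$, and extracting $\omega_R$ from this in the modular case --- where the Reynolds operator does not exist and $(\omega_S)^G\neq\omega_R$ in general --- is precisely the hard part (this is the content of the Broer/Fleischmann--Woodcock/Braun formula for the canonical class, which expresses $[\omega_R]$ as $[\det_V^{-1}]$ corrected by explicitly computed multiples of the height-one primes lying under the reflection hyperplanes). Second, the sentence \lq\lq such pseudoreflection contributions can be handled by the usual averaging argument just as in the nonmodular setting\rq\rq\ is exactly the step that fails naively when $p\mid |G|$: one cannot divide by $|G|$, and the correct substitute is a relative transfer over the (prime-to-$p$) cyclic group generated by each homology together with a computation of the Dedekind different at each ramified height-one prime, which is where the no-transvection hypothesis is actually consumed. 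As written, your argument establishes the nonmodular case (where it reduces to Watanabe's theorem, already stated as Theorem \ref{wtnb}) but does not prove the modular statement; to close the gap you would need to supply the explicit divisorial formula for $\omega_R$ and verify that, with $\det_V=1$ and all pseudoreflections diagonalizable, every coefficient in that formula vanishes.
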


\begin{theorem}\label{Bra2}
\({\bf Braun}\)
(\cite{Brau},Theorem C)
Suppose that
$G \subset GL(V)$
is a finite group with no pseudoreflection(of any type)and
$S(V)^G$
is Gorenstein.
Then,
$G \subset SL(V)$.
\end{theorem}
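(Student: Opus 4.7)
The plan is to express the canonical module $\omega_{S(V)^G}$ in terms of semi-invariants with character $\det^{-1}$ and then to exploit the no-pseudoreflection hypothesis via the divisor class group. For any character $\chi\colon G\to K^\times$, put
\[
S(V)^{G,\chi}=\{f\in S(V):g\cdot f=\chi(g)f\ \text{for all}\ g\in G\}.
\]
This is a rank-one reflexive graded $S(V)^G$-module, so the assignment $\chi\mapsto[S(V)^{G,\chi}]$ defines a homomorphism $\Phi\colon\widehat G\to\Cl(S(V)^G)$ from the character group of $G$ to the divisor class group of $S(V)^G$.

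The first step is to establish
\[
\omega_{S(V)^G}\cong S(V)^{G,\det^{-1}}
\]
up to a shift of grading. The starting point is $\omega_{S(V)}\cong S(V)(-n)$, with the $G$-action twisted by $\det$, since $g\cdot(dx_1\wedge\cdots\wedge dx_n)=\det(g)\,dx_1\wedge\cdots\wedge dx_n$. Under the no-pseudoreflection hypothesis, the extension $S(V)^G\subset S(V)$ is unramified in codimension one: the inertia group at any height-one prime of $S(V)$ consists of elements that act trivially on a hypersurface, hence (since the fixed locus of a linear automorphism is linear) fix a hyperplane pointwise, and therefore are either the identity or pseudoreflections. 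Combined with the $(S_2)$-property of the modules involved, one may descend the isomorphism of canonical modules across the cover in codimension one and reflexivize to obtain the desired global identification.

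The second step is to determine $\ker\Phi$. A standard class-group computation for the Galois-type cover $S(V)^G\subset S(V)$ shows that $\ker\Phi$ is generated by the characters associated to inertia groups at ramified height-one primes of $S(V)$, i.e.\ by characters of pseudoreflections; under the hypothesis that $G$ contains no pseudoreflections, this kernel is trivial, so $\Phi$ is injective.

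Combining these steps, the Gorenstein hypothesis forces $\omega_{S(V)^G}$ to be free of rank one, so $[S(V)^{G,\det^{-1}}]=0$ in $\Cl(S(V)^G)$ and $\det^{-1}\in\ker\Phi$. Injectivity of $\Phi$ then gives $\det^{-1}=1$, i.e.\ $\det g=1$ for every $g\in G$, which is exactly $G\subset SL(V)$. The main technical obstacle is the first step in the modular case, where the Reynolds operator is unavailable; the no-pseudoreflection hypothesis is precisely what kills the codimension-one ramification and allows the canonical-module descent to go through.
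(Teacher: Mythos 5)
The paper does not actually prove this statement: Theorem \ref{Bra2} is quoted from Braun's paper (\cite{Bra}, Theorem C) and used as a black box, so there is no in-paper argument to compare yours against. Judged on its own, your outline reproduces the strategy that Braun and Fleischmann--Woodcock in fact use: identify $\omega_{S(V)^G}$ with a module of relative invariants for the determinant character, observe that $\chi \mapsto [S(V)^{G,\chi}]$ embeds $\widehat G = \Hom(G,K^\times)$ into $\Cl(S(V)^G)$ once the cover $S(V)^G \subset S(V)$ is unramified in codimension one (Samuel's descent sequence together with $\Cl(S(V))=0$), and conclude that Gorensteinness forces the class of the determinant character to vanish. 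Your ramification analysis is correct: an element of the inertia group of a height-one prime moves every linear form into that prime, so $\rank(\id-g)\le 1$ and $g$ is the identity or a pseudoreflection, whence no pseudoreflections implies injectivity of $\Phi$. The one place your write-up understates the difficulty is the first step: in the modular case the isomorphism $\omega_{S(V)^G}\cong S(V)^{G,\det^{\pm1}}$ (up to a degree shift) is precisely the main technical theorem of Braun and of Fleischmann--Woodcock on the quasi-canonical module, and ``descend in codimension one and reflexivize'' is the right idea but requires establishing $\omega_{A_\fp}\cong(\omega_{B_\fp})^G$ at the unramified height-one primes without a Reynolds operator (e.g.\ via the trace form of the \'etale extension); as written this step is a pointer to the literature rather than a proof. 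A cosmetic remark: $g$ scales $dx_1\wedge\cdots\wedge dx_n$ by $\det(g|_{V^*})=\det(g|_V)^{-1}$, so depending on convention the relevant character is $\det$ rather than $\det^{-1}$; since one is trivial exactly when the other is, the conclusion $G\subset SL(V)$ is unaffected.
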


In Bertin's example,
$G$
acts on
$V$
as a permutation representation.
In this case,
$K[V]^G$
is generated by the orbit sums of all monomials.
And therefore,
the Hilbert series of
$K[V]^G$
is independent of the characteristic of
$K$.

\begin{definition}\label{permu repre}
Let
$\{ e_1 , \dots , e_n \}$
be a basis of
$V$
and
$\{x_1, \dots x_n\}$
be the dual basis of
$V^*$
with respect to
$\{ e_1 , \dots , e_n \}$.
We say that
$V$
is a permutation representation of
$G$
if for any
$g \in G$
and any
$i$,
there exists
$j$
such that
$g\(e_i\) = e_j$
is satisfied.
This is equivalent to say that for any
$g \in G$
and any
$i$,
there exists
$j$
such that
$g\(x_i\) = x_j$.
\end{definition}

\begin{theorem}\label{gene by orbit sum}
Let
$V$
be a permutation representation of
$G$.
Then,
$K[V]^G$ is generated over
$K$
by the orbit sums of all monomials.
\end{theorem}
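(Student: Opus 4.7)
The plan is to exploit the fact that a permutation action on the variables induces a permutation action on the set of all monomials in $K[x_1,\dots,x_n]$, and that this set of monomials is a $K$-basis of $K[V]$. Once those two observations are in place, the theorem essentially reduces to bookkeeping on coefficients.

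First I would fix notation. By Definition~\ref{permu repre}, for each $g \in G$ and each index $i$ there is an index $j$ with $g(x_i) = x_j$; so the $G$-action on $K[V] = K[x_1,\dots,x_n]$ (extended as $K$-algebra automorphisms) sends any monomial $m = x_1^{a_1}\cdots x_n^{a_n}$ to another monomial $g(m) = x_{g(1)}^{a_1}\cdots x_{g(n)}^{a_n}$. Hence the set $\mathcal{M}$ of all monomials is $G$-stable, and it decomposes into a disjoint union of finite $G$-orbits $\mathcal{M} = \bigsqcup_{O} O$. For each orbit $O$, its orbit sum $s_O := \sum_{m \in O} m$ is clearly $G$-invariant.

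Next I would take an arbitrary $f \in K[V]^G$ and expand it uniquely as $f = \sum_{m \in \mathcal{M}} c_m\, m$ with $c_m \in K$ and only finitely many $c_m$ nonzero. Applying any $g \in G$ gives
\[
g(f) = \sum_{m \in \mathcal{M}} c_m\, g(m) = \sum_{m' \in \mathcal{M}} c_{g^{-1}(m')}\, m'.
\]
Comparing with $f = g(f)$ and using the linear independence of monomials, we get $c_{m'} = c_{g^{-1}(m')}$ for all $g$ and $m'$; equivalently, the coefficient function $m \mapsto c_m$ is constant on each $G$-orbit. Denoting this common value on an orbit $O$ by $c_O$, we can regroup
\[
f = \sum_{O} c_O \sum_{m \in O} m = \sum_{O} c_O\, s_O,
\]
which exhibits $f$ as a $K$-linear combination of orbit sums.

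There is no real obstacle here beyond being careful that the two key facts (the $G$-stability of $\mathcal{M}$ and the fact that $\mathcal{M}$ is a $K$-basis of $K[V]$) are both invoked explicitly; the rest is forced by uniqueness of monomial expansion. The only subtle point worth flagging is that the argument produces a $K$-linear generating set, which is stronger than a $K$-algebra generating set, so the theorem follows a fortiori.
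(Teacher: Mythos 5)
Your proof is correct and rests on the same key observation as the paper's: since the monomials form a $K$-basis permuted by $G$, the coefficients of an invariant must be constant on $G$-orbits, so the invariant is a $K$-linear combination of orbit sums. The only cosmetic difference is that the paper first restricts to a single homogeneous component and then peels off orbit sums by induction on the number of monomials, whereas you regroup the whole sum over orbits in one step; both yield the stronger conclusion that the orbit sums span $K[V]^G$ as a $K$-vector space.
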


\begin{proof}
For any
$g \in G$,
the action of
$g$
on
$K[V]^G$
is degree preserving.
So,
it is sufficient to prove that every homogeneous part
$K[V]^G_{\(n\)}$
is generated by the orbit sums of all monomials of
$K[V]^G_{\(n\)}$.
Let
$f \in K[V]^G_{\(n\)}$
We can write
$f$
as follows.
\[
f = \sum_{\deg (I) = d} a_I x^I \ (a_I \in K )
\]
For any $g \in G$,
$g(f)=\sum_{\deg (I) = d} a_I \cdot g(x^I)$.
So,
$a_I = a_J$
if
$x^J \in Gx^I$.
Let
$\mathcal{O}_G\(x^I\)$
denote orbit sum of
$x^I$.
Then,
$f-a_I\mathcal{O}_G\(x^I\) \in K[V]^G_{\(n\)}$.
We can finish proof by induction on the number of monomials contained in
$f$.
\end{proof}

\begin{corollary}\label{Hilb seri independent}
Let
$V$
be a permutation representation of
$G$.
Then,
the Hilbert series of
$K[V]^G$
is independent of the characteristic of
$K$.
\end{corollary}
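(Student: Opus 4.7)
The plan is to enhance Theorem \ref{gene by orbit sum} from a generating statement to a basis statement, and then read off the dimension of each graded piece as a combinatorial count that is manifestly independent of the base field.

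First I would fix a degree $n$ and recall that the monomials $x^I$ with $\deg(I) = n$ form a $K$-basis of $K[V]_{(n)}$, independently of $K$. Since $V$ is a permutation representation, the group $G$ acts on this set of monomials by permutations (Definition \ref{permu repre}). Partition the monomials of degree $n$ into $G$-orbits $\mathcal{O}_1, \ldots, \mathcal{O}_{r(n)}$, and let $\mathcal{O}_G(x^{I_j})$ denote the orbit sum corresponding to $\mathcal{O}_j$.

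Next I would argue that $\{\mathcal{O}_G(x^{I_1}), \ldots, \mathcal{O}_G(x^{I_{r(n)}})\}$ is a $K$-basis of $K[V]^G_{(n)}$. Theorem \ref{gene by orbit sum} already gives spanning. For linear independence, note that the orbit sums are supported on pairwise disjoint subsets of the monomial basis of $K[V]_{(n)}$, so any nontrivial $K$-linear relation among them would give a nontrivial $K$-linear relation among distinct monomials, which is impossible. Hence
\[
\dim_K K[V]^G_{(n)} = r(n),
\]
where $r(n)$ is the number of $G$-orbits on degree-$n$ monomials in $x_1, \ldots, x_n$.

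Finally, I would observe that $r(n)$ is defined purely in terms of the permutation action of $G$ on the set of monomials of degree $n$, which is a combinatorial datum determined by the original permutation action of $G$ on $\{e_1, \ldots, e_n\}$ and has nothing to do with $K$. Therefore
\[
H(K[V]^G, \lambda) = \sum_{n \in \bn} r(n) \lambda^n
\]
is independent of the characteristic of $K$, completing the proof. There is no real obstacle here; the only point requiring any care is the linear-independence step, which I would justify as above by appealing to the disjointness of distinct $G$-orbits on the monomial basis.
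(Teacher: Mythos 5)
Your proof is correct and is exactly the argument the paper intends: the corollary is stated without proof as an immediate consequence of Theorem \ref{gene by orbit sum}, and your upgrade from ``generating set'' to ``basis'' via the disjoint supports of distinct orbit sums is the right way to make that implication explicit, yielding $\dim_K K[V]^G_{(n)} = r(n)$ independently of $K$.
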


\begin{definition}\label{gap}
For
$A \in \NN^n$,
we let
$\set (A)$
denote
$\{a_1 , \dots a_n\}$,
$\operatorname{ht}(A) = \max \{a_i \mid i=1 , \dots n \}$,
where
$A = (a_1 , \dots , a_n)$
We say
$x^A$
has a gap (at
$r$) if there exists a number
$r \in \NN$
such that
$\{ r+1 , \dots , \operatorname{ht}(A) \} \subset \set (A)$,
and
$r \not \in \set(A)$.

\end{definition}

\begin{theorem}\label{Gobe}
\({\bf G\"{o}bel}\)
Let
$V$
be a permutation representation of
$G$.
Then,
\[
\{  \mathcal{O}_G(x^A) \mid x^A \ does \ not \ have \ a \ gap \} \cup \{x_1x_2 \cdots x_n\}
\]
is a generating set for
$K[V]^G$.

\end{theorem}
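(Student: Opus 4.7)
My plan is to apply Theorem~\ref{gene by orbit sum} to reduce the statement to showing that every orbit sum $\mathcal{O}_G(x^A)$ lies in the $K$-subalgebra $R$ generated by the listed set. Since $V$ is a permutation representation I identify $G$ with its image in $S_n$, so that the elementary symmetric polynomials $s_1,\dots,s_n$ from Corollary~\ref{elem symm sop} are $G$-invariant and available as multipliers. I will induct on $A$ under the well-ordering in which $A \prec A'$ precisely when either $\deg x^A < \deg x^{A'}$ or the degrees agree and the decreasingly-sorted rearrangement $\lambda(A)=(\lambda_1,\dots,\lambda_n)$ is lexicographically smaller than $\lambda(A')$. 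The base case is $x^A$ gap-free, where $\mathcal{O}_G(x^A)$ is itself a listed generator.

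For the inductive step, if every $a_i \geq 1$, I set $A'=A-(1,\dots,1)\in\NN^n$; then $\mathcal{O}_G(x^A)=(x_1x_2\cdots x_n)\mathcal{O}_G(x^{A'})$, and $\mathcal{O}_G(x^{A'})\in R$ by induction on degree. Otherwise some $a_i$ equals $0$, so the smallest gap $r$ of $x^A$ necessarily satisfies $r\ge 1$. I set $I=\{i : a_i > r\}$, $m=|I|$, and let $B$ be obtained from $A$ by subtracting $1$ at each position in $I$. From $\lambda_m > r$ and the fact that $r$ is absent from the entries of $A$, one gets $\lambda_m \geq r+1$ and $\lambda_{m+1} \leq r-1$, so the multiset of entries of $B$ splits into a block in $[r,\infty)$ and a disjoint block in $[0,r-1]$. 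I will compare $\mathcal{O}_G(x^A)$ with the product $\mathcal{O}_G(x^B)\cdot s_m$.

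The heart of the argument is the expansion
\[
\mathcal{O}_G(x^B)\cdot s_m \;=\; \mathcal{O}_G(x^A) \;+\; \sum_{\lambda(A'')<_{\mathrm{lex}}\lambda(A)} c_{A''}\,\mathcal{O}_G(x^{A''}),
\]
with integer coefficients $c_{A''}$. For each $C$ in the $G$-orbit of $A$ I claim there is a unique pair $(C',J)$ with $C'\in G\cdot B$, $|J|=m$, and $C'+\mathbf{1}_J=C$ (where $\mathbf{1}_J$ denotes the $\{0,1\}$-vector supported on $J$): the block separation above forces $J=\{i:C_i>r\}$ and $C'=C-\mathbf{1}_J$, and $C'$ is verified to be a $G$-translate of $B$ using that $G$ acts by coordinate permutation. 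Hence each monomial of $\mathcal{O}_G(x^A)$ appears with coefficient exactly $1$, independent of the characteristic. The main obstacle will be the complementary sorting lemma: every other pair $(C',J)$ contributes a monomial whose decreasingly-sorted exponent is strictly lex-below $\lambda(A)$, because incrementing any coordinate outside the top $m$ of $\lambda(B)$ while skipping some coordinate inside moves a unit from a larger entry to a smaller one and strictly decreases the sorted tuple; the clean threshold at $r$ rules out tie-breaking pathologies. Once both facts are in hand, the inductive hypothesis handles $\mathcal{O}_G(x^B)$ (smaller degree) and the residual orbit sums (same degree, strictly smaller $\lambda$), so rearranging the displayed identity places $\mathcal{O}_G(x^A)$ inside $R$ and closes the induction.
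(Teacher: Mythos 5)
The paper only \emph{states} G\"obel's theorem, quoting it as a known result, so there is no internal proof to compare yours against; what you have written is a correct reconstruction of G\"obel's original induction: reduce to orbit sums via Theorem~\ref{gene by orbit sum}, strip off $x_1\cdots x_n$ when every exponent is positive, and otherwise multiply the lowered orbit sum $\mathcal{O}_G(x^B)$ by $s_m$ and peel off smaller terms. Two points in your sketch should be written out. First, ``available as multipliers'' tacitly assumes $s_m$ itself lies in the subalgebra $R$: Corollary~\ref{elem symm sop} only gives invariance, so you should note that for $1\le m\le n-1$ the polynomial $s_m$ is the sum, each with coefficient $1$, of the $G$-orbit sums of the square-free monomials $x^{\mathbf{1}_J}$ with $|J|=m$, whose exponent set $\{0,1\}$ is gap-free, hence $s_m\in R$ (and $m<n$ does hold in your gap branch because some $a_i=0$). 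Second, and more importantly, termination of the induction requires not merely that non-maximal pairs $(C',J)$ yield sorted exponent vectors $\le_{\mathrm{lex}}\lambda(A)$, but that any monomial $C'+\mathbf{1}_J$ whose sorted exponent vector \emph{equals} $\lambda(A)$ actually lies in the $G$-orbit of $A$; otherwise a distinct $G$-orbit with the same partition would appear with no decrease in your ordering and the recursion would stall. Your threshold observation does deliver this, but it deserves a proof: since $A$ has no entry equal to $r$, the positions of $C'+\mathbf{1}_J$ with value $\ge r+1$ must be exactly $P=\{i : C'_i\ge r\}$, and comparing $\sum_{i\in P}(C'_i+[i\in J])$ with $\sum_{i\in I}a_i$ forces $|J\cap P|=m$, i.e.\ $J=P$, whence $C'+\mathbf{1}_J=\tau\cdot A$ for the $\tau\in G$ with $C'=\tau\cdot B$. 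With these two points made explicit, the coefficient of $\mathcal{O}_G(x^A)$ in $\mathcal{O}_G(x^B)\,s_m$ is exactly $1$ in every characteristic and the induction closes, so the argument is complete.
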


By applying G\"{o}bel's theorem,
we can prove that a ring of invariants of
$A_n$
is a hypersurface.

\begin{definition}
We say that a Noetherian ring
$R$
is a hypersurface if
$\emdim R \le \dim R + 1$
is satisfied.

\end{definition}

\begin{theorem}
$K[V]^{A_n}$
is a hypersurface.

\end{theorem}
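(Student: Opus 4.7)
The plan is to apply Göbel's theorem (Theorem \ref{Gobe}) and reduce the generating set of $K[V]^{A_n}$ down to $n+1$ elements, namely the elementary symmetric polynomials $s_1,\ldots,s_n$ together with one extra element. Since $\dim K[V]^{A_n}=n$, this will give $\emdim K[V]^{A_n}\le n+1$, which is the desired hypersurface bound.

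First, I will use Göbel's theorem to write $K[V]^{A_n}$ as generated by $\mathcal{O}_{A_n}(x^A)$ for no-gap exponent vectors $A$, together with $x_1\cdots x_n=s_n$. The key combinatorial observation is the relationship between the $A_n$-orbit and the $S_n$-orbit of $x^A$: they coincide exactly when $\mathrm{Stab}_{S_n}(x^A)$ contains an odd permutation, and otherwise the $A_n$-orbit is precisely half of the $S_n$-orbit. Since $\mathrm{Stab}_{S_n}(x^A)$ contains a transposition as soon as two entries of $A$ are equal, I will separate the no-gap monomials into two classes based on whether the entries of $A$ are all distinct.

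If $x^A$ has no gap and $\operatorname{ht}(A)<n-1$, then $\mathrm{set}(A)\subseteq\{0,1,\ldots,\operatorname{ht}(A)\}$ has fewer than $n$ elements, so the $n$ entries of $A$ must include a repeat, a transposition lies in $\mathrm{Stab}_{S_n}(x^A)$, and consequently $\mathcal{O}_{A_n}(x^A)=\mathcal{O}_{S_n}(x^A)\in K[V]^{S_n}=K[s_1,\ldots,s_n]$. The only remaining no-gap exponent vectors have $\mathrm{set}(A)=\{0,1,\ldots,n-1\}$, i.e. $A$ is a permutation of $A_0:=(0,1,2,\ldots,n-1)$. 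Here $\mathrm{Stab}_{S_n}(x^{A_0})=\{e\}\subset A_n$, so the $S_n$-orbit splits into exactly two $A_n$-orbits; call their orbit sums $f$ and $g$. By construction, $f+g=\mathcal{O}_{S_n}(x^{A_0})$ is symmetric, and any odd permutation swaps $f$ and $g$, so $fg$ is symmetric as well; both lie in $K[s_1,\ldots,s_n]$.

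It follows that every Göbel generator of $K[V]^{A_n}$ lies in $K[s_1,\ldots,s_n,f]$: the symmetric orbit sums are in $K[s_1,\ldots,s_n]$, the element $x_1\cdots x_n=s_n$ is already there, and the other orbit sum $g=(f+g)-f$ is recovered from $f$ and a symmetric function. Thus $K[V]^{A_n}=K[s_1,\ldots,s_n,f]$, so $\emdim K[V]^{A_n}\le n+1=\dim K[V]^{A_n}+1$, proving it is a hypersurface. The main obstacle in executing this plan is the orbit-splitting analysis in the middle step; once one sees that the distinction between \emph{one} and \emph{two} $A_n$-orbits depends precisely on whether all entries of $A$ are distinct, everything else is bookkeeping. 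No characteristic restriction is needed, since the argument only counts generators rather than exhibiting explicit defining relations.
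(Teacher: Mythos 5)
Your proposal is correct and follows essentially the same route as the paper: G\"obel's theorem reduces the generators to no-gap orbit sums, all of which are symmetric except those coming from permutations of $(0,1,\dots,n-1)$, leaving a single extra generator $f$ over $K[s_1,\dots,s_n]$. You are in fact slightly more careful than the paper at one point --- the paper asserts that every $x^I \notin A_n x^{I_0}$ has a repeated exponent, which fails for the second half of the $S_n$-orbit of $x^{I_0}$, whereas you correctly recover that half-orbit sum as $(f+g)-f$.
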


\begin{proof}
By Theorem \ref{Gobe},
$K[V]^{A_n}$
is generated by the orbit sums which have no gap.
Fix
$I_0 = (0 , 1 , \dots , n-1)$.
We show that if
$x^I \not \in A_nx^{I_0}$,
$\mathcal{O}_G(x^I) \in K[V]^{S_n}$.
If
$x^I \not \in A_nx^{I_0}$,
there exists
$i,j$
such that
$a_i = a_j$,
$i \neq j$.
So we obtain
$(i \ j )x^I = x^I$.
For any
$\sigma \in S_n \backslash A_n$,
there exists
$\tau_\sigma \in A_n$
such that
$\sigma = \tau_\sigma(i \ j)$.
Hence,
\[
\sigma \mathcal{O}_{A_n}(x^I) = \tau_\sigma \sum_{\tau \in A_n} (i \ j) \tau x^I = \tau_\sigma \sum_{\tau \in A_n} \tau (i \ j) x^I 
\]
\[
\ \ \ \ \ \ \ \ \ \ \ \ \ \ \ \ \ = \tau_\sigma \sum_{\tau \in A_n} \tau x^I = \tau_\sigma \mathcal{O}_{A_n}(x^I) = \mathcal{O}_{A_n}(x^I)
\]
(We should pay attention to that
$\sigma A_n = A_n \sigma$
for all
$\sigma \in S_n$.)
Therefore,
$\mathcal{O}_{A_n}(x^I) \in K[V]^{S_n}$,
and
\[
K[V]^{A_n} = K[V]^{S_n}[\mathcal{O}_{A_n}(x^{I_0})] = K[s_1 , \dots , s_n][\mathcal{O}_{A_n}(x^{I_0})]
\]
\end{proof}

\section{Bertin's example}
In this section,
we give a new proof of Bertin's celebrated example of a ring of invariants that is not Cohen-Macaulay (this is the main part of this paper).
The ring is the first example of a UFD which is not Cohen-Macaulay.
For the new proof,
we prepare a lemma.

\begin{lemma}\label{pseudo transposition}
Let
$G$
be a permutation group.
Then,
$g \in G$
is a pseudoreflection if and only if
$g$
is a transposition.
\end{lemma}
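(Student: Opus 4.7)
The plan is to compute $\rank(\id - g)$ directly in terms of the cycle structure of $g$, viewed as a permutation of the basis $\{e_1,\dots,e_n\}$ of $V$. The key linear-algebraic observation is that for any permutation $g$, the fixed subspace $\ker(\id - g)$ has a basis consisting of the orbit sums $\sum_{i\in O} e_i$, one for each $g$-orbit $O$ on $\{1,\dots,n\}$. Once this is established, rank-nullity gives $\rank(\id-g) = n - (\text{number of } g\text{-orbits})$, and the equivalence falls out immediately: $\rank(\id-g)=1$ iff $g$ has exactly $n-1$ orbits iff the cycle type of $g$ consists of one $2$-cycle and $n-2$ fixed points, i.e.\ $g$ is a transposition.

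For the orbit-sum claim, I would argue as follows. Write an arbitrary vector as $v=\sum a_i e_i$. Then $g(v) = \sum a_i e_{g(i)} = \sum a_{g^{-1}(i)} e_i$, so $g(v)=v$ forces $a_i = a_{g^{-1}(i)}$ for every $i$. This just says the coefficients are constant on $g$-orbits, so $v$ is a $K$-linear combination of the orbit sums. Conversely every orbit sum is obviously fixed, and distinct orbit sums involve disjoint basis vectors, hence are linearly independent. This argument is characteristic-free, so the proof works in both the modular and nonmodular cases, which is important for the intended application to Bertin's example.

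For the backward direction specifically, one can also just verify by hand that if $g=(i\ j)$, then $(\id-g)e_k=0$ for $k\ne i,j$ and $(\id-g)e_i = e_i-e_j = -(\id-g)e_j$, so the image is the one-dimensional span of $e_i-e_j$. The forward direction reduces, via the orbit-sum computation, to the combinatorial fact that a partition of $\{1,\dots,n\}$ into $n-1$ nonempty blocks must have a unique block of size $2$ with all other blocks singletons.

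There is no serious obstacle: the only thing to be mildly careful about is that some sources phrase \lq\lq pseudoreflection\rq\rq\ in terms of the action on $V^*$ (the $x_i$'s) rather than on $V$ (the $e_i$'s), but since $\rank(\id-g)$ on $V$ and on $V^*$ are equal (they are transpose maps), the argument is unaffected. The whole proof should be only a few lines.
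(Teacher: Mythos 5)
Your proof is correct and complete, but it reaches the key fact by a different mechanism than the paper. The paper decomposes $g$ into disjoint cycles, writes out the matrix of $\id - g$ on each cycle block explicitly, and checks by hand that an $r$-cycle block contributes rank $r-1$, so that a product of two or more nontrivial cycles already has $\rank(\id-g)\ge 2$; this forces a pseudoreflection to be a single $2$-cycle. You instead identify $\ker(\id-g)$ with the span of the orbit sums $\sum_{i\in O}e_i$ and invoke rank--nullity to get $\rank(\id-g)=n-(\text{number of }g\text{-orbits})$ in one stroke. Both routes rest on the same underlying identity ($\rank(\id-g)=\sum(r_i-1)$ over the cycle lengths $r_i$), but yours avoids the explicit matrix manipulations and the conjugation/reordering of the basis that the paper performs, and it is manifestly characteristic-free, which matters for the application at $p=2$. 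The paper's version has the minor virtue of displaying concretely what $\id-g$ looks like, which it then reuses visually to identify the surviving case as a transposition, but your kernel computation is the cleaner and more easily verified argument. Your closing remark about $V$ versus $V^*$ is a sensible precaution and does not affect anything, as you say.
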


\begin{proof}
Whether
$g$
is a pseudoreflection or not is stable under conjugation.
So we permutate a basis of
$V$
if we need to do.
Any permutation can be represented as a product of some cyclic permutations.
A cyclic permutation of length
$r$
is represented by a matrix conjugate to
\\ 
{\color{white}--------------------------------}
$\overbrace{\ \ \ \ \ \ \ \ \ \ \ \ \ \ \ \ \ \ }^{r}$
 \vspace{-4mm}
\[
\left(
\begin{array}[]{cccc|ccc} 
  0 & {} & {} & 1\\
  1 & \ddots & {} & {} & {} & \text{\huge{0}} \\
  {} & \ddots & \ddots & {}\\ 
  {} & {} & 1 & 0 \\ \hline
  {} & {} & {} & {} & 1 \\
  {} & \text{\huge{0}} & {} & {} & {} & \ddots \\
  {} & {} & {} & {} & {} & {} & 1
\end{array} 
\right).
\]

If
$g \in G$
is a product of
$2$
or more cyclic permutations then,
$\rank \id - g \ge 2$.
And,
\\
{\color{white}-------------------------------------}
$\overbrace{\ \ \ \ \ \ \ \ \ \ \ \ \ \ \ \ \ \ \ \ \ \ \ }^{r}$
 \vspace{-4mm}
\[
\left(
\begin{array}{cccc} 
  1 & {} & {} & -1\\
  -1 & \ddots & {} & {}  \\
  {} & \ddots & \ddots & {}\\ 
  {} & {} & -1 & 1 \\ 
\end{array} 
\right).
\]
has rank
$r-1$.
Therefore,
if
$g$
is a pseudoreflection,
\[
g =
\left(
\begin{array}{cc|ccc} 
  0 & 1 & 0 & \dots & 0 \\
  1 & 0 & 0 & \dots & 0 \\ \hline
  0 & 0 & 1 & \\ 
  \vdots & \vdots & {} & \ddots  \\
  0 & 0 & {}  & {}&1
\end{array} 
\right).
\]
This is transposition.
For the converse,
if
$g \in G$
is a transposition,
of course,
$g$
is a cyclic permutation of length
$2$.
So
$\rank \id - g = 1$.
Hence
$g$
is a pseudoreflection.
\end{proof}

\begin{example}
Let
$G$
be a subset of the symmetric group of degree 4,
generated by
$\sigma = \(1 \ 2 \ 3 \ 4 \)$.
$G$
acts on
$K[x_1,x_2,x_3,x_4]$
by permutation of variables.
That is,
$G$ acts on
$\{ x_1,x_2,x_3,x_4 \}$
with
\[
\sigma \(x_i\) = x_{\sigma\(i\)}
\]
Then,
$K[V]^G$
is not Cohen-Macaulay.
\end{example}

Before introducing the new proof,
we describe a draft of Bertin's original proof.
By Corollary \ref{Hilb seri independent},
$K[V]^G$,
the Hilbert series does not depend on characteristic $p$.
To calculate Hilbert function,
we can assume $p=0$ and apply Theorem \ref{Moli}.
\[
H ( K[V]^G , \lambda ) =\frac{1}{4} \left( \frac{1}{(1- \lambda)^4} + \frac{1}{(1- \lambda ^2)^2} + \frac{2}{1- \lambda^4} \right)
\]
\[
\ \ \ \ \ \ \ \ \ \ \ \ =
\frac{1 + \lambda^2 + \lambda^3 + 2 \lambda^4 + \lambda^5}{(1-\lambda)(1-\lambda^2)(1-\lambda^3)(1-\lambda^4)}
\]
By Proposition \ref{formula Hilb diveded}\ 
and Corollary \ref{elem symm sop},
if
$K[V]^G$
is Cohen-Macaulay,
it has an h.s.o.p.\ 
$s_1 , \dots , s_n$
and free generators
$f_1 , \dots , f_6$
on
$K[s_1 , \dots , s_n]$
with degree
$1 , 3 , 4 , 4 , 5$.
But there are no free generators which satisfy this condition.
Hence
$K[V]^G$
is not Cohen-Macaulay.

In Bertin's original proof,
we need large amount of calculation for searching relations of generators.
In our new proof,
we argue Gorensteinness of
$K[V]^G$
instead of calculation of generators.

Our new proof is as follows.
\begin{proof} 
We assume
$K[V]^G$
is Cohen Macaulay for
$p=2$.
We already saw the Hilbert series and its numerator is not palindromic.
By theorem \ref{Stan},
$K[V]^G$
is not Gorenstein for
$p=2$.
(If
$p \neq 2$,
it's the nonmodular case.
So
$K[V]^G$
is not Gorenstein for any characteristic
$p$.)
On the other hand,
by Lemma \ref{pseudo transposition},
$G$
contains no pseudoreflection.
Because
$V$
is a permutation representation of
$G$,
$\det (g) = \pm 1$
for all
$g \in G$.
If
$p=2$,
$1 = -1$.
Hence
$G \subset SL(V) $.
So,
by Theorem \ref{Bra1},
$K[V]^G$
is Gorenstein.
This is a contradiction.
Thus,
$K[V]^G$
is not Cohen-Macaulay ring for
$p=2$.
\end{proof}

We can generalize this proof as follows.

\begin{theorem}
Let
$G$
be a subset of the symmetric group acting on 
$K[V]$
by permutations of variables and contains an odd permutation but not contains any pseudoreflection.
Then,
the ring of invariants is not Cohen-Macaulay for
$p=2$.
\end{theorem}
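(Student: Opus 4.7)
The plan is to copy the strategy used for Bertin's example almost verbatim, replacing the explicit Molien computation with a characteristic-free application of Watanabe's theorem. Assume, for contradiction, that $K[V]^G$ is Cohen-Macaulay in characteristic $2$.

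First I would show that the numerator $h(\lambda)$ of the Hilbert series of $K[V]^G$ is not palindromic, by a detour through characteristic $0$. Since $V$ is a permutation representation of $G$, Corollary \ref{Hilb seri independent} tells us that $H(K[V]^G,\lambda)$, and therefore its numerator $h(\lambda)$, does not depend on the characteristic of $K$, so I may work in characteristic $0$. There $K[V]^G$ is a Cohen-Macaulay integral domain (Hochster-Eagon, and it is a subring of the polynomial ring $K[V]$). Watanabe's theorem (Theorem \ref{wtnb}) applies: under the hypothesis that $G$ contains no pseudoreflection, $K[V]^G$ is Gorenstein if and only if $G \subset SL(V)$. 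But the assumed odd permutation $\tau \in G$ has $\det(\tau) = -1 \neq 1$ in characteristic $0$, so $G \not\subset SL(V)$, and $K[V]^G$ is not Gorenstein in characteristic $0$. By Stanley's theorem (Theorem \ref{Stan}), $h(\lambda)$ is not palindromic.

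Next I would return to characteristic $2$. The numerator $h(\lambda)$ is the same as in characteristic $0$, so it is still non-palindromic; combined with the standing Cohen-Macaulay assumption and the fact that $K[V]^G$ is still a domain, Stanley's theorem then gives that $K[V]^G$ is \emph{not} Gorenstein in characteristic $2$. On the other hand, in characteristic $2$ every permutation matrix has determinant $\pm 1 = 1$, so $G \subset SL(V)$; and by Lemma \ref{pseudo transposition} the hypothesis of no pseudoreflection is exactly the hypothesis of no transposition, which in particular forbids transvections. Braun's Theorem B (Theorem \ref{Bra1}) therefore forces $K[V]^G$ to be Gorenstein, contradicting the previous sentence.

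The only step that needs real thought is the first one, where one must notice that the odd-permutation hypothesis witnesses $G \not\subset SL(V)$ in characteristic $0$ but not in characteristic $2$; this asymmetry between the two characteristics, combined with the characteristic-independence of the Hilbert series guaranteed by the permutation structure, is what drives the whole argument. Once the non-palindromicity of $h(\lambda)$ has been transported from characteristic $0$ to characteristic $2$, the Braun-Stanley clash is immediate.
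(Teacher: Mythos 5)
Your proposal is correct and follows essentially the same route as the paper: characteristic-independence of the Hilbert series for permutation representations, Watanabe plus Stanley in characteristic $0$ to get a non-palindromic numerator, and Braun's Theorem B in characteristic $2$ to force the contradiction. You merely make explicit some steps the paper leaves implicit (Hochster--Eagon for Cohen-Macaulayness in characteristic $0$, and the domain hypothesis needed to invoke Stanley's theorem), which is a welcome clarification rather than a different argument.
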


\begin{proof}
We assume that
$K[V]^G$
is Cohen-Macaulay for
$p=2$.
$G$
contains odd permutation so,
if
$p=0$,
$G \not \subset SL(V)$.
By Lemma \ref{pseudo transposition},
$G$
contains no pseudoreflection.
Therefore,
by Theorem \ref{wtnb},
$K[V]^G$
is not Gorenstein so a numerator of Hilbert series is not palindromic.
On the other hand,
if
$p=2$,
$G \subset SL(V)$.
By Theorem \ref{Bra1},
$K[V]^G$
is Gorenstein so a numerator of Hilbert series is palindromic.
This is contradiction.
Thus,
$K[V]^G$
is not Cohen-Macaulay for
$p=2$.
\end{proof}





Seno Takuma \\
Department of Mathmatics \\
Osaka Metropolitan University \\
Sumiyoshi-ku Ssaka 558-8585, JAPAN \\
e-mail: {\tt takumasenoo71@gmail.com}

\end{document}